\documentclass[10pt]{amsart}
\title
[Algebraic hulls and exponential iterated  integrals  ]
{Algebraic hulls of solvable groups and exponential iterated  integrals on solvmanifolds }

\author{Hisashi Kasuya}
\usepackage{amssymb}
\usepackage{amsmath}
\usepackage{amscd}
\usepackage{amstext}
\usepackage{amsfonts}
\usepackage[all]{xy}
\usepackage{eucal}

\address[H.kasuya]{Graduate school of mathematical science university of tokyo japan }
\curraddr{}
\email{khsc@ms.u-tokyo.ac.jp}

\thanks{}
\keywords{exponential iterated integral, algebraic hull, solvmanifold}

\newcommand{\C}{\mathbb{C}}
\newcommand{\R}{\mathbb{R}}

\newcommand{\Z}{\mathbb{Z}}
\newcommand{\g}{\frak{g}}
\newcommand{\n}{\frak{n}}

\newcommand{\Aut}{\rm Aut}

\theoremstyle{plain}
\newtheorem{theorem}{Theorem}[section]
\theoremstyle{plain}
\newtheorem{remark}{Remark}[section]
\theoremstyle{lemma}
\newtheorem{lemma}[theorem]{Lemma}
\theoremstyle{definition}

\theoremstyle{proposition}
\newtheorem{proposition}[theorem]{Proposition}
\theoremstyle{corollary}
\newtheorem{corollary}[theorem]{Corollary}
\theoremstyle{remark}

\begin{document} 
\maketitle
\begin{abstract}
We represent the coordinate ring of algebraic hulls   (which are generalizations of the Malcev completions of nilpotent  groups for solvable  groups)   of solvmanifolds $G/\Gamma$ by using  Miller's exponential  iterated integrals (which are extensions of Chen's iterated integrals) of invariant differential  forms.
\end{abstract}
\section{\bf Introduction}
Let $G$ be a simply connected  Lie group 
and $\g$ the Lie algebra of $G$.
We consider the space  $\bigwedge \g_{\C}^{\ast}$   of $\C$-valued left $G$-invariant differential forms on $G$.
We assume that $G$ has a lattice (i. e. cocompact discrete subgroup) $\Gamma$.
We consider the compact homogeneous space $G/\Gamma$ and $\bigwedge \g_{\C}^{\ast}$ as a subcomplex of the de Rham complex $A^{\ast}_{\C}(G/\Gamma)$ of $G/\Gamma$.
Suppose $G$ is nilpotent.
Then  we have the unique unipotent algebraic group ${\bf U}_{\Gamma}$ called the Malcev completion of $\Gamma$ such that there is a injection $\Gamma\to {\bf U}_{\Gamma}$ with the Zariski-dense image.
We can represent the coordinate ring of ${\bf U}_{\Gamma}$ by using  Chen's iterated integrals on $G/\Gamma$  (see \cite{CH}).
Since the inclusion $\bigwedge \g_{\C}^{\ast}\subset A^{\ast}_{\C}(G/\Gamma)$ induces a cohomology isomorphism by Nomizu's theorem \cite{Nom}, $\bigwedge \g_{\C}^{\ast}$ is the Sullivan minimal model of $ A^{\ast}_{\C}(G/\Gamma)$ (see \cite{H}).
This implies $H^{0}(\bar B (\bigwedge \g_{\C}^{\ast}))\cong H^{0}(\bar B( A^{\ast}_{\C}(G/\Gamma)))$ where $\bar B (\bigwedge \g_{\C}^{\ast})$ and $\bar B( A^{\ast}_{\C}(G/\Gamma))$ are the reduced bar constructions of $\bigwedge \g_{\C}^{\ast}$ and  $A^{\ast}_{\C}(G/\Gamma)$ respectively (see \cite{CH2}).
Hence we can represent the coordinate ring of ${\bf U}_{\Gamma}$ by using  Chen's iterated integrals of left-invariant forms.

Suppose $G$ is solvable.
Then Chen's iterated integrals  on  $G/\Gamma$ does not give sufficient information on the fundamental group of $G/\Gamma$.
For example, let $G=\R\ltimes_{\phi} \R^{2}$ such that $\phi(t)=\left(
\begin{array}{cc}
e^{ t}&0\\
0&e^{-t}
\end{array}
\right) $.
Then $G$ has a lattice $\Gamma$ and the inclusion $\bigwedge \g^{\ast}_{\C}\subset A_{\C}^{\ast}
(G/\Gamma)$ induces a cohomology isomorphism (see \cite{Hatt}).
Since we have $H^{1}(G/\Gamma,\C)=H^{1}(\bigwedge \g^{\ast}_{\C})=\C$, by Chen's results,
iterated integrals represent the coordinate ring of a additive algebraic group  ${\mathbb G}_{\rm ad}=\C$ (see \cite{M}).
But since $\Gamma$ is solvable and not abelian, we can't embed $\Gamma$ in ${\mathbb G}_{\rm ad}$.

 In \cite{Mos2}, as an extension of the Malcev completion, Mostow constructed a Zariski-dense embedding of $\Gamma$
in an algebraic group  ${\bf H}_{\Gamma}$ called algebraic hull of $\Gamma$.
In \cite{M}, Miller gave  extensions of Chen's iterated integrals called  exponential iterated integrals.
In this paper we represent the coordinate ring of ${\bf H}_{\Gamma}$ by using  Miller's exponential iterated integrals of left-invariant differential forms on $G/\Gamma$.

\section{\bf Relative completions and algebraic hulls}

Let $G$ be a discrete group (resp. a Lie group).
We call a map $\rho:G\to GL_{n}(\C)$ a representation, if $\rho$ is a homomorphism  of  groups (resp. Lie groups).
In this paper   we denote by $T_{n}(\C)$ the group of $n\times n$ upper triangular matrix and denote by $U_{n}(\C)$ the group of $n\times n$  upper triangular unipotent matrix.

\subsection{\bf algebraic groups and pro-algebraic groups}
In this paper an algebraic group means an affine algebraic variety $\bf G$ over $\C$ with a group structure such that the multiplication and inverse are morphisms of varieties.
All algebraic groups arise as Zariski-closed subgroups of $GL_{n}(\C)$.
A pro-algebraic group is an inverse limit of algebraic groups.
If a pro-algebraic group is an inverse limit of unipotent algebraic groups, it is called pro-unipotent.
Let $\bf G$ be a pro-algebraic group.
We denote by ${\bf U}({\bf G})$ the maximal pro-unipotent normal subgroup called the pro-unipotent radical.
If ${\bf U}({\bf G})=e$, $\bf G$ is called reductive.
We denote by $\C[\bf G]$ the coordinate ring of $\bf G$.
The group structure on $\bf G$ induces  a Hopf algebra structure on $\C[\bf G]$.
It is known that we have the anti-equivalence between algebras and affine varieties induces an anti-equivalence between pro-algebraic groups  and  reduced Hopf algebras.

\begin{theorem}{\rm (\cite{Mos1},\cite{HM})} \label{sppp}
Let $\bf G$ be a pro-algebraic group. Then the exact sequence 
\[\xymatrix{
	1\ar[r]&{\bf U}(\bf G) \ar[r]& {\bf G} \ar[r] & {\bf G}/{\bf U}(\bf G) \ar[r] &1 
	 }
\]
splits.
\end{theorem}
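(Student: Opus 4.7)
The strategy is to reduce to the classical finite-dimensional case via an inverse-limit argument. For an ordinary algebraic group, the conclusion is precisely Mostow's theorem on the existence of a Levi decomposition in characteristic zero: there is a reductive closed subgroup ${\bf L} \subset {\bf G}$ with ${\bf L} \cap {\bf U}({\bf G}) = \{e\}$ and ${\bf L} \cdot {\bf U}({\bf G}) = {\bf G}$, and any two such Levi subgroups are conjugate by an element of ${\bf U}({\bf G})$. I would take this as a black box.

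Next I would write ${\bf G} = \varprojlim_{\alpha \in I} {\bf G}_{\alpha}$ as an inverse limit of algebraic quotients with transition morphisms $\pi_{\alpha\beta}: {\bf G}_{\beta} \to {\bf G}_{\alpha}$. Since each $\pi_{\alpha\beta}$ carries unipotent radicals into unipotent radicals, one has ${\bf U}({\bf G}) = \varprojlim {\bf U}({\bf G}_{\alpha})$ and the reductive quotient ${\bf G}/{\bf U}({\bf G}) = \varprojlim {\bf G}_{\alpha}/{\bf U}({\bf G}_{\alpha})$. The problem then becomes: choose a family of Levi subgroups $\{{\bf L}_{\alpha} \subset {\bf G}_{\alpha}\}_{\alpha \in I}$ that is \emph{compatible} with the system, in the sense $\pi_{\alpha\beta}({\bf L}_{\beta}) \subseteq {\bf L}_{\alpha}$ for all $\alpha \le \beta$. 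Given such a compatible family, ${\bf L} := \varprojlim {\bf L}_{\alpha}$ is a closed pro-reductive subgroup of ${\bf G}$ mapping isomorphically onto ${\bf G}/{\bf U}({\bf G})$, yielding the desired splitting.

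The main obstacle is producing the compatible family. I would apply Zorn's lemma to the poset of partial compatible choices indexed by downward-closed subsets $J \subseteq I$. The nontrivial step is the extension: given a compatible family $\{{\bf L}_{\alpha}\}_{\alpha \in J}$ and a new index $\beta$ just above $J$, pick any Levi ${\bf L}'_{\beta} \subset {\bf G}_{\beta}$ by the algebraic case; then $\pi_{\alpha\beta}({\bf L}'_{\beta})$ is a Levi in $\pi_{\alpha\beta}({\bf G}_{\beta})$ which need not coincide with the prescribed ${\bf L}_{\alpha} \cap \pi_{\alpha\beta}({\bf G}_{\beta})$, so a conjugation by an element $u_{\alpha} \in {\bf U}({\bf G}_{\alpha})$ is required. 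The technical heart is coherence: one needs a single $u \in {\bf U}({\bf G}_{\beta})$ whose images $\pi_{\alpha\beta}(u)$ carry out all the required adjustments simultaneously. This reduces to a Mittag-Leffler type surjectivity statement for the inverse system of torsors of Levi subgroups under conjugation by the unipotent radicals, which holds because the connecting maps are surjective with pro-unipotent fibers, and inverse limits over directed sets of non-empty pro-unipotent homogeneous spaces remain non-empty.
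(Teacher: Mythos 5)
The paper offers no proof of this statement: it is quoted as a known theorem with references to Mostow (\emph{Fully reducible subgroups of algebraic groups}) and Hochschild--Mostow (\emph{Pro-affine algebraic groups}), so there is no internal argument to compare yours against. Your proposal is, in outline, a reasonable reconstruction of the route those references take: Levi--Mostow decomposition in the finite-dimensional characteristic-zero case, conjugacy of Levi subgroups under the unipotent radical, and a passage to the limit. The reductions you make along the way are sound: images of Levi subgroups under the surjections $\pi_{\alpha\beta}$ are again Levi subgroups, ${\bf U}({\bf G})=\varprojlim {\bf U}({\bf G}_{\alpha})$, and the map on unipotent radicals induced by a surjection is surjective, so the transition maps on the sets of Levi subgroups are indeed surjective.

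The genuine gap is the final sentence: ``inverse limits over directed sets of non-empty \ldots homogeneous spaces remain non-empty \ldots because the connecting maps are surjective.'' Surjectivity of the connecting maps between non-empty sets does \emph{not} imply that a cofiltered limit over an arbitrary (possibly uncountable) directed set is non-empty; this is only automatic for countable (Mittag--Leffler) systems, and a general pro-algebraic group need not admit a countable cofinal family of algebraic quotients. To close the argument you must actually use the algebro-geometric structure of the torsors of Levi subgroups (each is a homogeneous space under a unipotent group, hence an affine variety, and one needs a $\C$-point of the cofiltered limit), or else work directly with the Hopf algebra $\C[{\bf G}]$ as Hochschild--Mostow do. Your Zorn's-lemma bookkeeping also glosses over the fact that a maximal partial family indexed by a downward-closed $J\subsetneq I$ must be extended over the entire downward closure of $J\cup\{\beta\}$, not just over the pairs $(\alpha,\beta)$ with $\alpha\in J$; the indices below $\beta$ but outside $J$ need compatible choices too. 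Neither issue is fatal---the theorem is true and provable along these lines---but as written the ``technical heart'' you identify is asserted rather than proved, and the reason you give for it is not sufficient. An alternative that avoids the torsor bookkeeping entirely is cohomological: the obstruction to splitting lies in $H^{2}$ of the pro-reductive quotient with coefficients in the pro-unipotent radical, which vanishes by linear reductivity in characteristic zero.
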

Let $G$ be a discrete group or Lie group.
We denote by $A(G)$ the inverse limit of all representations $\phi :G\to \bf G$ with Zariski-dense images.
We call the pro-unipotent radical ${\bf U}(A(G))$ of $A(G)$ the unipotent hull of $G$ and denote it ${\bf U}_{G}$.

\subsection{\bf Relative completion}
Let $\rho:G\to {\bf S}$ be a representation of $G$ to a diagonal algebraic group ${\bf S}$ with the Zariski-dense image.
Let $\phi: G\to {\bf G}$ be a representation of  $G$ to an algebraic group $\bf G$ with the Zariski-dense image.
We call $\phi$ a $\rho$-relative representation if we have the commutative diagram 
\[\xymatrix{
	1\ar[r]&{\bf U}(\bf G) \ar[r]& {\bf G} \ar[r] &{\bf S} \ar[r] &1 \\
	& &G\ar^{\phi}[u]	 \ar^{\rho}[ru]
 }.
\]

If $\bf S$ is contained in an algebraic torus, for any $\rho$-relative 
representation $\phi:G\to {\bf G}$ there exists a faithful 
representation ${\bf G}\hookrightarrow T_{n}(\C)$ such that ${\bf G}\cap 
U_{n}(\C)={\bf U}({\bf G})$(see \cite{M}).

We denote by $\mathcal G_{\rho}(G)$ the inverse limit of   $\rho$-relative representations $\phi_{i}:G\to {\bf G}_{i}$.
We call $\mathcal G_{\rho}(G)$ the $\rho$-relative completion of $G$.
If $\bf S$ is trivial, $\mathcal G_{\rho}(G)$ is the classical Malcev (or unipotent) completion.

\subsection{\bf Algebraic hulls}
We define the algebraic hulls of polycyclic groups (resp. Lie groups) which constructed in {\cite{Mos2}.

A group $\Gamma$ is polycyclic if it admits a sequence 
\[\Gamma=\Gamma_{0}\supset \Gamma_{1}\supset \cdot \cdot \cdot \supset \Gamma_{k}=\{ e \}\]
of subgroups such that each $\Gamma_{i}$ is normal in $\Gamma_{i-1}$ and $\Gamma_{i-1}/\Gamma_{i}$ is cyclic.
For a polycyclic group $\Gamma$, we denote ${\rm rank}\,\Gamma=\sum_{i=1}^{i=k} {\rm rank}\,\Gamma_{i-1}/\Gamma_{i}$.
Let $G$ be a simply connected solvable Lie group and $\Gamma$ be a lattice of $G$.
Then $\Gamma$ is torsion-free polycyclic and $ \dim G={\rm rank}\,\Gamma$.

Let $G$ be a simply connected solvable Lie group or torsion-free polycyclic group.
Consider the algebraic completion $A(G)$.
Then it is known that the unipotent hull ${\bf U}_{G}={\bf U}(A(G))$ is finite dimensional (see \cite{Mos2}).
By Theorem \ref{sppp}, we have a splitting $A(G)=(A(G)/{\bf U}_{G})\ltimes_{\phi} {\bf U}_{G}$.
Let $K$ be the kernel of the action $\phi:(A(G)/{\bf U}_{G})\to {\rm Aut}({\bf U}_{G})$.
Then $K$ is the maximal reductive normal subgroup of $A(G)$ (see \cite{Mos2}).
Denote ${\bf H}_{G}=A(G)/K$ and call it the algebraic hull of $G$.

\begin{theorem}{\rm (\cite{Mos2},\cite{R})} \label{AL}
Let $G$ be a simply connected solvable Lie group (resp. torsion-free 
polycyclic group).
Then $G \to {\bf H}_{G}$ is injective and ${\bf H}_{G}$ is a finite dimensional algebraic group such that:\\
$(1)$ $\dim {\bf U}({\bf H}_{G})=\dim G$ (resp. $={\rm rank}\, G $).\\
$(2)$The centralizer of ${\bf U}({\bf H}_{G})$ in ${\bf H}_{G}$ is 
contained in ${\bf U}({\bf H}_{G})$.\\
Conversely if an algebraic group $\bf H$ with an injective homomorphism $\
\psi:G\to {\bf H}$ with the Zariski-dense image satisfies the properties $(1)$ and $(2)$,
 then ${\bf H}$ is isomorphic to ${\bf H}_{G}$.
\end{theorem}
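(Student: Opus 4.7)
The plan is to construct ${\bf H}_{G}$ explicitly as the quotient $A(G)/K$ already introduced in the text preceding the statement, verify $(1)$, $(2)$ and injectivity directly from the construction, and handle the uniqueness side by a universality argument combined with a kernel analysis.

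For the forward direction, apply Theorem \ref{sppp} to split $A(G) = S \ltimes {\bf U}_{G}$ with $S = A(G)/{\bf U}_{G}$ reductive. The cited fact from \cite{Mos2} that ${\bf U}_{G}$ is finite-dimensional makes $\Aut({\bf U}_{G})$ a finite-dimensional algebraic group, so $S/K$, embedded in $\Aut({\bf U}_{G})$, is finite-dimensional and ${\bf H}_{G} = (S/K) \ltimes {\bf U}_{G}$ is a finite-dimensional algebraic group whose pro-unipotent radical is ${\bf U}({\bf H}_{G}) = {\bf U}_{G}$. Property $(2)$ is immediate from faithfulness of $S/K$ on ${\bf U}_{G}$: any element of $(S/K) \ltimes {\bf U}_{G}$ centralizing ${\bf U}_{G}$ must have trivial $S/K$-component and so lies in ${\bf U}_{G}$. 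The dimension identity $(1)$ and injectivity of $G \hookrightarrow {\bf H}_{G}$ form the technical core of Mostow's construction in \cite{Mos2}: they are established by tracking the nilradical of $G$ and a maximal diagonalisable complement through $A(G)$, checking that the diagonalisable generators contribute semisimple parts not absorbed by $K$ and that the nilradical of $G$ fills out ${\bf U}_{G}$.

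For the converse, given $\psi: G \to {\bf H}$ injective with Zariski-dense image satisfying $(1)$ and $(2)$, the universal property of $A(G)$ extends $\psi$ to a surjective homomorphism $\Psi: A(G) \to {\bf H}$; set $N = \ker \Psi$. Then $\Psi({\bf U}_{G}) = {\bf U}({\bf H})$, and since these groups have equal dimension by $(1)$, connectedness of unipotent algebraic groups forces $N \cap {\bf U}_{G} = \{e\}$ and $\Psi|_{{\bf U}_{G}}$ to be an isomorphism. After conjugating the Levi complement we may assume $N \subset S$. The equality $\Psi(N) = \{e\}$ then says that $N$ centralizes ${\bf U}({\bf H})$, which via the isomorphism means that $N$ acts trivially on ${\bf U}_{G}$, so $N \subset K$. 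Conversely, $\Psi(K)$ is a reductive subgroup of ${\bf H}$ centralizing ${\bf U}({\bf H})$; by $(2)$ it therefore lies in ${\bf U}({\bf H})$, and being simultaneously reductive and unipotent it is trivial, so $K \subset N$. Hence $N = K$ and ${\bf H} \cong A(G)/K = {\bf H}_{G}$.

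The hard part is the existence half: verifying $(1)$ and injectivity simultaneously requires an inductive argument through the structure of solvable (resp. polycyclic) $G$, since a priori the quotient map $A(G) \to A(G)/K$ could identify elements of $G$ or collapse the unipotent dimension. Once these are in hand, the converse reduces cleanly to the Levi-based kernel chase sketched above.
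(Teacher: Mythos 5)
This theorem is quoted in the paper from \cite{Mos2} and \cite{R} with no proof supplied, so there is no internal argument to compare yours against; I can only assess your proposal on its own terms. Your uniqueness (converse) half is essentially sound: extending $\psi$ to $\Psi:A(G)\to{\bf H}$, showing $\Psi({\bf U}_{G})={\bf U}({\bf H})$, using $(1)$ to see $\Psi\vert_{{\bf U}_{G}}$ is an isomorphism, and then pinching the kernel $N$ between $K$ and $K$ is the standard and correct route. Two of your steps are looser than they should be, though. First, ``after conjugating the Levi complement we may assume $N\subset S$'' is not justified as stated; the clean argument is that ${\bf U}(N)$ is normal in $A(G)$, hence ${\bf U}(N)\subseteq N\cap{\bf U}_{G}=\{e\}$, so $N$ is a \emph{reductive normal} subgroup and $N\subseteq K$ follows from the maximality of $K$ asserted in the text. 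Second, property $(2)$ is not quite ``immediate from faithfulness'': an element $(s,u)$ of $(S/K)\ltimes{\bf U}_{G}$ centralizing ${\bf U}_{G}$ only forces $s$ to act as the inner automorphism by $u^{-1}$, which is a unipotent automorphism; you still need that $S/K$ acts by semisimple (indeed diagonalizable, since $G$ is solvable) automorphisms to conclude $s=e$.

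The genuine gap is in the existence half. Injectivity of $G\to{\bf H}_{G}$ and the identity $\dim{\bf U}({\bf H}_{G})=\dim G$ (resp.\ ${\rm rank}\,G$) are the entire substance of Mostow's theorem, and your proposal does not prove them --- it describes in one sentence what a proof would have to do and defers to \cite{Mos2}. A priori the quotient $A(G)\to A(G)/K$ could fail to be injective on $G$, and nothing in the formal setup forces ${\bf U}_{G}$ to have dimension exactly $\dim G$ rather than something smaller; ruling both out requires the structure theory of lattices in solvable Lie groups (or of polycyclic groups). Note that the paper's Section 2.4 actually supplies a concrete construction that delivers these facts in the Lie group case: the semisimple splitting ${\rm Ad}_{s}(G)\ltimes G={\rm Ad}_{s}(G)\ltimes\bar{N}$ realizes ${\bf H}_{G}$ as the Zariski closure of $G$ in ${\rm Aut}_{a}(\bar{\bf N})\ltimes\bar{\bf N}$, with ${\bf U}({\bf H}_{G})=\bar{\bf N}$ of dimension $\dim\bar{N}=\dim G$ and with $G\hookrightarrow{\rm Aut}_{a}(\bar{\bf N})\ltimes\bar{\bf N}$ manifestly injective. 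If you want a self-contained argument rather than a citation, that splitting is the missing ingredient you should incorporate.
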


\subsection{Direct constructions of algebraic hulls}
The idea of this subsection is based on classical works of semi-simple splitting (see \cite{Re},  \cite{OV} and the references given there).
Let $\g$ be a solvable Lie algebra, and $\n=\{X\in \g\vert {\rm ad}_{X}\, \, {\rm is\,\, nilpotent}\}$.
$\n$ is the maximal nilpotent ideal of $\g$ and called the nilradical of $\g$.
 Then we have $[\g, \g]\subset \n$.
 Let $D(\g)$ be the Lie algebra of derivations of $\g$.
 By the Jordan decomposition, we have  ${\rm ad}_{X}={\rm ad}_{sX}+{\rm ad}_{nX}$ such that ${\rm ad}_{sX} $ is a semi-simple operator and ${\rm ad}_{nX}$ is  a nilpotent operator.
 Since we have $d_{X}$, $n_{X}$ $\in D(\g)$,
  we have the map ${\rm ad}_{s}:\g\to D(\g)$.
Since $\rm ad$ is trigonalizable  (Lie's theorem), this map is homomorphism with the kernel $\n$.
 Let $\bar{\g} ={\rm Im} \,{\rm ad}_{s}\ltimes\g$.
and $\bar{\n}=\{X-{\rm ad}_{sX}\in  \bar{\g}  \vert X\in\g\}$.
\begin{proposition}\label{spli}
$\bar{\n}$ is a nilpotent ideal of $\bar{\g}$ and we have a decomposition $\bar{\g}={\rm Im}\, {\rm ad}_{s} \ltimes\bar{\n}$.
\end{proposition}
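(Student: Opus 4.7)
The plan is to exploit one structural observation and then verify each of the three claims (ideal, nilpotency, splitting) by direct bracket computations in the semi-direct product $\bar{\g}$. The observation is that ${\rm ad}_{sX}(\g)\subset \n$ for every $X\in\g$. Since $[\g,\g]\subset\n$, the operator ${\rm ad}_X$ induces the zero map on $\g/\n$; because the Jordan decomposition is preserved by passage to invariant quotients, ${\rm ad}_{sX}$ also induces zero on $\g/\n$, so ${\rm ad}_{sX}(\g)\subset \n$. Combined with the already stated $\ker{\rm ad}_s=\n$, this tells us that ${\rm ad}_s$ annihilates each of ${\rm ad}_{sX}(\g)$, ${\rm ad}_{nX}(\g)$, and $[\g,\g]$, which are exactly the ingredients that will appear in the calculations.

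Closure of $\bar{\n}$ under the bracket and the ideal property follow from the semi-direct bracket formula
\[ [A_1+X_1, A_2+X_2] = [A_1,A_2] + A_1(X_2) - A_2(X_1) + [X_1,X_2]. \]
Taking $A_i = -{\rm ad}_{sX_i}$ and using that ${\rm ad}_s$ is a homomorphism (so $[{\rm ad}_{sX_1},{\rm ad}_{sX_2}] = {\rm ad}_{s[X_1,X_2]} = 0$ because $[X_1,X_2]\in\n$), one finds $[X_1 - {\rm ad}_{sX_1}, X_2 - {\rm ad}_{sX_2}] = Z$ where $Z = [X_1,X_2] + {\rm ad}_{sX_2}(X_1) - {\rm ad}_{sX_1}(X_2)$; each summand is in $\n$, so $Z\in\n$, hence ${\rm ad}_{sZ}=0$, whence $Z = Z - {\rm ad}_{sZ} \in \bar{\n}$. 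An analogous expansion of $[A+Y,\, X - {\rm ad}_{sX}]$ with $A = {\rm ad}_{sW}$ arbitrary lands in $\bar{\n}$, proving the ideal property. For the splitting, any $A+X\in\bar{\g}$ decomposes as $(A+{\rm ad}_{sX}) + (X - {\rm ad}_{sX}) \in {\rm Im}\,{\rm ad}_s + \bar{\n}$; and if $X - {\rm ad}_{sX} \in {\rm Im}\,{\rm ad}_s$, matching the $\g$-component forces $X=0$, so ${\rm Im}\,{\rm ad}_s \cap \bar{\n} = 0$.

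Nilpotency of $\bar{\n}$ is then handled via Engel's theorem. Fix $\xi = X - {\rm ad}_{sX} \in \bar{\n}$; the computations above show that ${\rm ad}^{\bar{\g}}_{\xi}$ maps all of $\bar{\g}$ into $\n \subset \g \subset \bar{\g}$, and that on $\n$ it acts as $Y \mapsto [X,Y] - {\rm ad}_{sX}(Y) = {\rm ad}_{nX}(Y)$, an operator that is nilpotent by the definition of the Jordan decomposition. Hence ${\rm ad}^{\bar{\g}}_{\xi}$ is itself nilpotent on $\bar{\g}$; its restriction to the ideal $\bar{\n}$ is therefore nilpotent for every $\xi\in\bar{\n}$, and Engel's theorem gives that $\bar{\n}$ is a nilpotent Lie algebra.

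The main obstacle is the conceptual step of recognising that ${\rm ad}^{\bar{\g}}_{\xi}|_\n = {\rm ad}_{nX}|_\n$ — that is, that subtracting ${\rm ad}_{sX}$ from $X$ exactly cancels the semisimple part of the adjoint action, leaving the nilpotent part behind. This is the essence of the ``semisimple splitting'' idea mentioned before the proposition; once this identification is made, every remaining claim reduces to short routine computations anchored by the elementary lemma ${\rm ad}_{sX}(\g)\subset\n$.
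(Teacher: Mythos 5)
Your proof is correct and follows essentially the same route as the paper: both rest on the containment ${\rm ad}_{sX}(\g)\subset \n$ together with the fact that ${\rm ad}_{s}$ is a homomorphism vanishing on $\n$, and both reduce nilpotency to the identity ${\rm ad}_{X-{\rm ad}_{sX}}={\rm ad}_{nX}$ on $\g$. The only divergence is in execution: the paper obtains ${\rm ad}_{sX}(\g)\subset\n$ from an explicit Lie-theorem triangularization of ${\rm ad}$ on $\g\otimes\C$, whereas you deduce it from the compatibility of the Jordan decomposition with the invariant quotient $\g/\n$, and you make explicit the Engel-theorem step that the paper leaves implicit.
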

\begin{proof}
 By  ${\rm ad}_{X-{\rm ad}_{sX}}={\rm ad}_{X}-{\rm ad}_{sX}$ on $\g$,  ${\rm ad}_{X-{\rm ad}_{sX}}$ is a nilpotent operator and
hence $\bar{\n}$ consists of nilpotent elements. 
 By  Lie's theorem, we have a basis 
\[X_{1},\dots,X_{l},X_{l+1}\dots ,X_{n}\]
 of $\g \otimes \C$ such that ${\rm ad}$
  is represented  by upper triangular matrices.
Since the nilradical $\n$ is an ideal, $\n\otimes \C$ is ${\rm ad}$-invariant subspace of $\g \otimes \C$.
We choose $X_{1},\dots,X_{l}$ a basis of  $\n\otimes \C$.
By $[\g, \g]\subset \n$, we have ${\rm ad}_{X}(\g\otimes \C)\subset \n\otimes \C=\langle X_{1},\dots ,X_{l}\rangle$, and hence ${\rm ad}$ represented as
\[{\rm ad}_{X}=\left(
\begin{array}{cccccc}
a_{11}(X)&\dots&&&\dots&a_{1l}(X)\\
&\ddots&&&&\vdots\\
&&a_{ll}(X)&\dots &&a_{lm}(X)\\
&&&0&\dots&0\\
&&&&\ddots&\vdots\\
&&&&&0
\end{array}
\right).
\]
Thus we have 
${\rm ad}_{sX}(X_{i})=a_{11}(X)X_{i} $ for $ 1\le i \le l$ and
$ {\rm ad}_{sX}(X_{i})=0$ for $l+1\le i \le n$.
By this we have 
\[
[X_{i}+{\rm ad}_{sY},X_{j}+{\rm ad}_{sZ}]\in\langle X_{1},\dots ,X_{l}\rangle=\n\otimes \C\]
for any $1\le i,j\le n$, $Y,Z\in \g$.
This implies $[\bar{\g}, \bar{\g}]\subset\n$.
 By $\n\subset \bar\n$,   $ \bar{\n}$ is an ideal of $\bar{\g}$ and
we have  $\bar{\g}=\{{\rm ad}_{sX}+Y-{\rm ad}_{sY}\vert X, Y\in \g\}={\rm Im}\, {\rm ad}_{s}\ltimes\bar{\n}$.
\end{proof}
By this proposition,  we have the inclusion $i:\g\to D(\bar{\n})\ltimes \bar{\n}$ given by $i(X)={\rm ad}_{sX}+X-{\rm ad}_{sX}$ for $X\in\g$.

Let $G$ be a simply connected solvable Lie group and $\g$ be the Lie algebra of $G$.
For the adjoint representation ${\rm Ad}:G\to {\rm Aut}(\g)$,
we consider  the semi-simple part ${\rm Ad}_{s}: G\to   {\rm Aut}(\g)$ as similar to the Lie algebra case.
Denote by $T$ the universal covering of ${\rm Ad}_{s}( G)$.
Let $\bar{N}$ be the simply connected Lie group which corresponds to $\bar{\n}$.
Then by Proposition \ref{spli}, we have $T\ltimes G=T\ltimes \bar N$.
By the proof of this proposition, the action $T\to{\rm Aut} (\bar \n)$ is the extension of the action of ${\rm Im}\, {\rm ad}_{s} $.
Hence we have ${\rm Ad}_{s}(G)\ltimes G={\rm Ad}_{s}(G)\ltimes \bar N$.

A simply connected nilpotent Lie group is considered as the real points of a unipotent $\R$-algebraic group (see \cite[p. 43]{OV}) by the exponential map.
We have the unipotent $\R$-algebraic group $\bf \bar{N}$ with ${\bf \bar{N}}(\R)=\bar{N}$.
We identify $\Aut_{a}({\bf \bar{N}})$ with $\Aut(\n_{\C})$ and $\Aut_{a}({\bf \bar{N}})$ has the $\R$-algebraic group structure with ${\rm Aut}_{a}({\bf \bar{N}})(\R)= {\rm Aut} (N)$.
So we have the $\R$-algebraic group $ \Aut_{a} (\bf\bar{N})\ltimes \bar{N}$.
Then by ${\rm Ad}_{s}(G)\ltimes G={\rm Ad}_{s}(G)\ltimes \bar N\subset  \Aut_{a} (\bf\bar{N})\ltimes \bar{N}$, we consider the Zariski-closure $\bf G$ of $G$ in $ \Aut_{a} (\bf\bar{N})\ltimes \bar{N}$.
Since ${\rm Ad}_{s}(G)$ is a group of diagonal automorphisms, we have ${\bf U}({\bf G})= \bar{\bf N}$.
By $\dim G=\dim \bar N$, we can easily check that $\bf G$  satisfies the properties (1), (2)  in Theorem \ref{AL} and hence it is the algebraic hull ${\bf H}_{G}$ of $G$.
Hence  the inclusion $i:\g\to D(\bar{\n})\ltimes \bar{\n}$ induces the algebraic hull $I:G\to {\bf H}_{G}$ of $G$.
Since $i:\g\to D(\bar{\n})\ltimes \bar{\n}$ is given by $i(X)={\rm ad}_{sX}+X-{\rm ad}_{sX}\in D(\bar{\n})\ltimes \bar{\n}$ ,   the composition $G\to {\bf H}_{G}\to {\bf H}_{G}/{\bf U}({\bf H}_{G})$ is induced by the Lie algebra homomorphism  ${\rm ad}_{s}:\g\to D(\g)$ by ${\bf U}({\bf G})= \bar{\bf N}$. 
Thus we have the following lemma.
\begin{lemma}
 The algebraic hull $ G\to {\bf H}_{G}$ is an ${\rm Ad}_{s}$-relative representation.
\end{lemma}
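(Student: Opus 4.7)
The plan is to unpack the definition of an ${\rm Ad}_{s}$-relative representation and verify each of its three ingredients directly from the explicit construction of ${\bf H}_{G}$ just given; most of the work is already done by identifying ${\bf H}_{G}$ with the Zariski closure of $G$ inside $\Aut_{a}({\bf \bar{N}})\ltimes \bar{\bf N}$.

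First I would fix the diagonal target. Let ${\bf S}$ be the Zariski closure of ${\rm Ad}_{s}(G)$ inside $\Aut_{a}({\bf \bar{N}})$. My next step would be to argue that ${\bf S}$ is genuinely diagonal, not merely diagonalisable: in Proposition~\ref{spli} a single basis of $\g\otimes\C$ was chosen via Lie's theorem in which every operator in ${\rm ad}_{s}(\g)$ is simultaneously diagonal, and exponentiation transports this to a diagonal form for ${\rm Ad}_{s}(G)$. By construction ${\rm Ad}_{s}:G\to {\bf S}$ has Zariski-dense image, so this supplies the required $\rho={\rm Ad}_{s}$ into a diagonal algebraic group.

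Next I would identify the reductive quotient of ${\bf H}_{G}$ with ${\bf S}$. Since ${\bf U}({\bf H}_{G})=\bar{\bf N}$ and ${\bf H}_{G}\subset \Aut_{a}({\bf \bar{N}})\ltimes \bar{\bf N}$, the first-factor projection restricts to a surjection ${\bf H}_{G}\to {\bf S}$ with kernel exactly $\bar{\bf N}$, producing an isomorphism ${\bf H}_{G}/{\bf U}({\bf H}_{G})\cong {\bf S}$. I would then observe that the composition $G\to {\bf H}_{G}\to {\bf H}_{G}/{\bf U}({\bf H}_{G})$ is simply the first-factor projection restricted to $G$, which coincides with ${\rm Ad}_{s}$; this is the commutative triangle appearing in the definition of a relative representation. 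Zariski density of $G$ in ${\bf H}_{G}$ is built into the construction of the algebraic hull.

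I do not anticipate a substantial obstacle here: every ingredient has been assembled in the preceding discussion, and the argument essentially consists of recognising that the decomposition ${\bf H}_{G}={\bf S}\ltimes \bar{\bf N}$ is precisely the object the definition of relative representation abstracts. The only point that deserves an explicit word is the distinction ``diagonal vs.\ diagonalisable'', which is secured once and for all by the common eigenbasis produced in the proof of Proposition~\ref{spli}.
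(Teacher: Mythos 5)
Your proposal is correct and follows essentially the same route as the paper: the paper likewise reads the lemma off from the identification ${\bf U}({\bf H}_{G})=\bar{\bf N}$ inside $\Aut_{a}({\bf \bar{N}})\ltimes\bar{\bf N}$, so that the composition $G\to{\bf H}_{G}\to{\bf H}_{G}/{\bf U}({\bf H}_{G})$ is induced by ${\rm ad}_{s}$ and lands in the (diagonal) closure of ${\rm Ad}_{s}(G)$. Your extra remark that the common eigenbasis from Proposition~\ref{spli} makes ${\rm Ad}_{s}(G)$ genuinely diagonal is exactly the fact the paper asserts when it writes that ${\rm Ad}_{s}(G)$ is a group of diagonal automorphisms.
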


\subsection{\bf Algebraic hulls and relative completions of solvable groups}

\begin{theorem}\label{relal}
Let $G$ be a simply connected Lie group.
Then the algebraic hull $ {\bf H}_{G}$ is isomorphic to the ${\rm Ad}_{s}$-relative completion  ${\mathcal G}_{{\rm Ad}_{s}}(G)$ of $G$.
\end{theorem}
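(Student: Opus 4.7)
The plan is to verify that $\mathbf{H}_G$ itself satisfies the defining universal property of the relative completion $\mathcal{G}_{\mathrm{Ad}_s}(G)$, namely that every $\mathrm{Ad}_s$-relative representation of $G$ factors through $\mathbf{H}_G$. Combined with the preceding lemma, which exhibits $\mathbf{H}_G$ as an object of the inverse system defining $\mathcal{G}_{\mathrm{Ad}_s}(G)$, this will identify $\mathbf{H}_G$ with the inverse limit.

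By the preceding lemma, $I\colon G \to \mathbf{H}_G$ is $\mathrm{Ad}_s$-relative, so there is a canonical surjection $\pi\colon \mathcal{G}_{\mathrm{Ad}_s}(G) \to \mathbf{H}_G$. To build an inverse, let $\phi\colon G \to \mathbf{G}$ be any $\mathrm{Ad}_s$-relative representation with Zariski-dense image. Extend $\phi$ via universality of the algebraic completion to a morphism $\bar\phi\colon A(G) \to \mathbf{G}$; the task is to show that $\bar\phi$ kills the normal subgroup $K \subset A(G)$ for which $\mathbf{H}_G = A(G)/K$. The key identification is $K = \ker\!\bigl(A(G)/\mathbf{U}_G \to \mathbf{S}\bigr)$, where $\mathbf{S}$ is the Zariski closure of $\mathrm{Ad}_s(G)$: by definition, $K$ is the kernel of the conjugation action $A(G)/\mathbf{U}_G \to \mathrm{Aut}(\mathbf{U}_G)$, and by the direct construction of the previous subsection the image of this action is precisely $\mathbf{S} \subset \mathrm{Aut}_a(\bar{\mathbf{N}})$.

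Granted this identification, the argument is short. The reductive quotient $\mathbf{G}/\mathbf{U}(\mathbf{G})$ equals $\mathbf{S}$ by the $\mathrm{Ad}_s$-relative hypothesis, and the composite $A(G) \xrightarrow{\bar\phi} \mathbf{G} \to \mathbf{S}$ agrees on $G$ with $\mathrm{Ad}_s$; by Zariski density of $G$ in $A(G)$ it must coincide with the canonical projection, whose kernel contains $K$. Thus $\bar\phi(K) \subset \mathbf{U}(\mathbf{G})$. Since $K$ lies in the reductive quotient $A(G)/\mathbf{U}_G$, its image under $\bar\phi$ is reductive, and a reductive subgroup of the unipotent $\mathbf{U}(\mathbf{G})$ must be trivial. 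Hence $\bar\phi$ descends to $\mathbf{H}_G$, and uniqueness of the factorization follows from Zariski density of $G$ in $\mathbf{H}_G$, yielding the desired inverse to $\pi$.

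The main obstacle is the identification $K = \ker(A(G)/\mathbf{U}_G \to \mathbf{S})$, which amounts to checking that the image of the adjoint-type action $A(G)/\mathbf{U}_G \to \mathrm{Aut}(\bar{\mathbf{N}})$ is exactly $\mathbf{S}$ rather than a larger reductive subgroup; this uses the explicit realization $\mathbf{H}_G \subset \mathrm{Aut}_a(\bar{\mathbf{N}}) \ltimes \bar{\mathbf{N}}$ of the previous subsection together with Zariski density of $G$ in $A(G)$. An alternative route would be to verify properties $(1)$ and $(2)$ of Theorem~\ref{AL} directly for $\mathcal{G}_{\mathrm{Ad}_s}(G)$ and apply the converse characterization of the algebraic hull.
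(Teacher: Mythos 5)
Your proof is correct, but it runs in the opposite direction from the paper's. The paper takes an arbitrary ${\rm Ad}_{s}$-relative representation ${\bf H}'$ equipped with a compatible surjection $\Phi:{\bf H}'\to{\bf H}_{G}$ and shows $\Phi$ is an isomorphism: surjectivity on pro-unipotent radicals together with ${\bf U}({\bf H}_{G})={\bf U}_{G}$ (the unipotent hull, already the maximal unipotent quotient) forces $\Phi$ to be an isomorphism on ${\bf U}$, and both reductive quotients are the closure of ${\rm Ad}_{s}(G)$; hence ${\bf H}_{G}$ is maximal in the (implicitly directed) inverse system and so equals the limit. You instead prove the universal property directly: every ${\rm Ad}_{s}$-relative representation factors through ${\bf H}_{G}=A(G)/K$, using the identification of $K$ with the kernel of $A(G)/{\bf U}_{G}\to{\bf S}$ and the fact that $\bar\phi(K)$ is a reductive subgroup of the unipotent group ${\bf U}({\bf G})$, hence trivial. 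What you call the ``main obstacle'' is in fact already supplied by the paper: the preceding lemma together with the direct construction shows ${\bf H}_{G}/{\bf U}({\bf H}_{G})\cong{\bf S}$ compatibly with ${\rm Ad}_{s}$, which is exactly the statement that $(A(G)/{\bf U}_{G})/K\cong{\bf S}$. Your route is somewhat longer but makes explicit the cofinality of ${\bf H}_{G}$ in the inverse system (which the paper leaves to ``the definition of the relative completion''), while the paper's five-lemma-style argument is shorter and leans only on the finite-dimensionality and maximality of ${\bf U}_{G}$ rather than on the presentation $A(G)/K$.
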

\begin{proof}
Consider a commutative diagram
\[\xymatrix{
	{\bf H}^{\prime} \ar[r]^{\Phi}&{\bf H}_{G}   \\
	G\ar[u]	 \ar[ru]
 }
\] 
for some ${\rm Ad}_{s}$-relative representation $G\to {\bf H}^{\prime}$.
Since $G\to {\bf H}^{\prime}$ and $G\to {\bf H}_{G}$ have Zariski-dense images, $\Phi : {\bf H}^{\prime} \to {\bf H}_{G}$ is surjective and the restriction $\Phi :{\bf U}({\bf H}^{\prime}) \to {\bf U} ({\bf H}_{G})$ is also surjective.
By ${\bf U} ({\bf H}_{G})={\bf U}_{G}$, $\Phi :{\bf U}({\bf H}^{\prime }) \to {\bf U} ({\bf H}_{G})$ is an isomorphism. 
Since $G\to {\bf H}^{\prime}$ and $G\to {\bf H}_{G}$ are ${\rm Ad}_{s}$-relative representations,  $\Phi$ induces the isomorphism ${\bf H}^{\prime}/ {\bf U} ({\bf H}^{\prime}) \to {\bf H}_{G}/{\bf U}({\bf H}_{G})$.
Hence $\Phi : {\bf H}^{\prime}\to {\bf H}_{G}$ is an isomorphism.
By the definition of ${\rm Ad}_{s}$-relative completion of $G$, we have the theorem. 
\end{proof}

\begin{theorem}\label{alrr}
Let $G$ be a simply connected solvable Lie group and $\Gamma$ a lattice of $G$.
Then the algebraic hull ${\bf H}_{\Gamma}$ of  $\Gamma$ is isomorphic to ${\rm Ad}_{s\vert_{\Gamma}}$-relative completion ${\mathcal G}_{{\rm Ad}_{s\vert_{\Gamma}}}(\Gamma)$ of $\Gamma$. 
\end{theorem}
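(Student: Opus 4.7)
My plan is to rerun the argument of Theorem \ref{relal} with $G$ replaced by $\Gamma$ and ${\rm Ad}_{s}$ by ${\rm Ad}_{s|_{\Gamma}}$. The only point requiring separate justification is the polycyclic analogue of the Lemma just preceding Theorem \ref{relal}: that $\Gamma\to {\bf H}_{\Gamma}$ is itself an ${\rm Ad}_{s|_{\Gamma}}$-relative representation. I would verify this by the direct construction, using $\dim {\bf U}({\bf H}_{\Gamma})={\rm rank}\,\Gamma$ from Theorem \ref{AL} together with cocompactness of $\Gamma$ to identify the reductive quotient ${\bf H}_{\Gamma}/{\bf U}({\bf H}_{\Gamma})$ canonically with the Zariski closure of ${\rm Ad}_{s|_{\Gamma}}(\Gamma)$, and to check that the composition $\Gamma\to {\bf H}_{\Gamma}\to {\bf H}_{\Gamma}/{\bf U}({\bf H}_{\Gamma})$ agrees with ${\rm Ad}_{s|_{\Gamma}}$.

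Granting this, for any ${\rm Ad}_{s|_{\Gamma}}$-relative representation $\Gamma\to {\bf H}^{\prime}$ in the inverse system defining $\mathcal{G}_{{\rm Ad}_{s|_{\Gamma}}}(\Gamma)$, consider the commutative diagram
\[\xymatrix{
	{\bf H}^{\prime} \ar[r]^{\Phi}&{\bf H}_{\Gamma}   \\
	\Gamma\ar[u]	 \ar[ru]
 }.\]
Zariski-density of both $\Gamma$-images forces $\Phi$ to be surjective, and, since the pro-unipotent radical is a characteristic subgroup, $\Phi$ restricts to a surjection ${\bf U}({\bf H}^{\prime})\twoheadrightarrow {\bf U}({\bf H}_{\Gamma})={\bf U}_{\Gamma}$. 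To upgrade this restriction to an isomorphism I would invoke the universal property of the algebraic completion $A(\Gamma)$: the canonical surjection $A(\Gamma)\twoheadrightarrow {\bf H}^{\prime}$ sends ${\bf U}_{\Gamma}={\bf U}(A(\Gamma))$ onto ${\bf U}({\bf H}^{\prime})$ (the quotient being reductive), and the composite ${\bf U}_{\Gamma}\twoheadrightarrow {\bf U}({\bf H}^{\prime})\xrightarrow{\Phi}{\bf U}_{\Gamma}$ coincides with the map induced by $A(\Gamma)\to {\bf H}_{\Gamma}=A(\Gamma)/K$, which is the identity on ${\bf U}_{\Gamma}$ because the reductive kernel $K$ meets ${\bf U}_{\Gamma}$ trivially. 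Finite-dimensionality of ${\bf U}_{\Gamma}$ then promotes both surjections to isomorphisms.

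Combined with the fact that ${\rm Ad}_{s|_{\Gamma}}$-relativity identifies the reductive quotients ${\bf H}^{\prime}/{\bf U}({\bf H}^{\prime})$ and ${\bf H}_{\Gamma}/{\bf U}({\bf H}_{\Gamma})$ (both with the Zariski closure of ${\rm Ad}_{s|_{\Gamma}}(\Gamma)$) and that $\Phi$ induces the identity there, this makes $\Phi$ an isomorphism, so ${\bf H}_{\Gamma}\cong\mathcal{G}_{{\rm Ad}_{s|_{\Gamma}}}(\Gamma)$ by the definition of the relative completion as an inverse limit. The main obstacle I expect is the preliminary step --- recognizing ${\bf H}_{\Gamma}$ as ${\rm Ad}_{s|_{\Gamma}}$-relative --- because a priori $\overline{{\rm Ad}_{s|_{\Gamma}}(\Gamma)}^{\mathrm{Zar}}$ could be strictly smaller than $\overline{{\rm Ad}_{s}(G)}^{\mathrm{Zar}}$, and cocompactness of $\Gamma$ plus the dimension bookkeeping from Theorem \ref{AL} are what rule this out. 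Everything else is formally parallel to Theorem \ref{relal}.
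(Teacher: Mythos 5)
Your proposal is correct and follows essentially the same route as the paper: first establish that $\Gamma\to{\bf H}_{\Gamma}$ is an ${\rm Ad}_{s\vert_{\Gamma}}$-relative representation via Theorem \ref{AL} and the rank/dimension count, then rerun the argument of Theorem \ref{relal}. The paper handles the preliminary step slightly more concretely --- it realizes ${\bf H}_{\Gamma}$ as the Zariski closure of $\psi(\Gamma)$ inside ${\bf H}_{G}$, so that relativity is inherited directly from the Lemma for $G$ --- but this is the same idea you describe, and your extra detail on why $\Phi$ restricts to an isomorphism of pro-unipotent radicals just makes explicit what the paper leaves terse.
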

\begin{proof}
For the algebraic hull $\psi :G\to {\bf H}_{G}$ of $G$, we consider the Zariski-closure of $\psi(\Gamma)$ in ${\bf H}_{G}$.
Then by $\dim G={\rm rank}\, \Gamma$ we can easily check that this algebraic group satisfies (1) and (2) in Theorem \ref{AL} and hence it is the algebraic hull ${\bf H}_{\Gamma}$ of $\Gamma$.
By the above theorem, $\Gamma\to {\bf H}_{\Gamma}$ is a ${\rm Ad}_{s\vert_{\Gamma}}$-relative representation.
As similar to the above proof,  we have the theorem.
\end{proof}

\section{Exponential iterated integral on solvmanifolds}
In this section we consider Miller's exponential iterated integrals.
Let $M$ be a $C^{\infty}$-manifold and $\Omega_{x}M$ be a space of piecewise smooth loops
$\lambda :[0,1]\rightarrow M$ with $\lambda(0)=x$.
For $1$-forms $\omega_{1},\dots ,\omega_{n}\in A^{\ast}_{\C}(M)$, the iterated integral  $\int \omega_{1} \omega_{2} \cdot \cdot \cdot \omega_{n}:\Omega_{x}M\to \C$ is defined by
\[\int_{\lambda} \omega_{1} \omega_{2} \cdot \cdot \cdot \omega_{n} = \int_{0\le t_{1}\le t_{2} \le \cdot \cdot \le t_{n} \le 1}F(t_{1})F(t_{2})\cdot \cdot \cdot F(t_{n}) dt_{1} dt_{2}\cdot \cdot \cdot dt_{n} \\
\lambda \in \Omega_{x}M
\]
where $F_{i}(t)dt=\lambda ^{\ast} \omega_{i} \in A^{1}([0,1])$.
In \cite{M}, for $\delta_{1}, \delta_{2}, \cdot \cdot \cdot ,\delta_{n}, \omega_{12}, \omega_{23} ,\cdot \cdot \cdot, \omega_{n-1n} \in A^{1}_{\C}(M)$ Miller defined the exponential iterated integral $\int e^{\delta_{1}}\omega_{12}e^{\delta_{2}}\omega_{23}\cdot\cdot \cdot e^{\delta_{n-1}}\omega_{n-1n}e^{\delta_{n}} :\Omega_{x}M\to \C$ as 
\[\int_{\lambda} e^{\delta_{1}}\omega_{12}e^{\delta_{2}}\omega_{23}\cdot\cdot \cdot e^{\delta_{n-1}}\omega_{n-1n}e^{\delta_{n}}\]
\[=
\sum_{m_{1},m_{2},\cdot \cdot \cdot m_{n} \ge 0}\int_{\lambda}\underbrace{\delta_{1}\dots \delta_{1}}_{m_{1} \ terms}\omega_{12}\underbrace{\delta_{2}\dots \delta_{2}}_{m_{2} \ terms}\cdot \cdot \cdot \omega_{n-1n}\underbrace{\delta_{n}\dots \delta_{n}}_{m_{n} \ terms}.
\]
 Then this infinite sum converges (see \cite{M}).
Let $L\subset A^{1}_{\C}(M)$ be a finitely generated $\Z$-module of  $1$-forms such that $dL=0$.
We denote $E^{L}(M,x)$  the $\C$-vector space of functions on $\Omega_{x}M$ generated by 
\[\{ \int e^{\delta_{1}}\omega_{12}\cdot \cdot \cdot \omega_{n-1n}e^{\delta_{n}}\vert \delta_{1},\cdot \cdot \cdot \delta_{n} \in L ,
 \ \omega_{12}, \omega_{23} ,\cdot \cdot \cdot, \omega_{n-1n} \in A^{\ast}_{\C}(M)\}.\]
If $I \in E^{L}(M,x)$ is constant on homotopy classes of loops $\lambda:[0,1]\to M$ relative to $\{0, 1\}$, we call $I$ a closed exponential iterated integral.
Let $H^{0}(E^{L}(M,x))$ denote the subspace of closed exponential iterated integrals. 
Take a $\Z$-basis $\{\delta_{1}, \delta_{2}, \dots ,\delta_{n}\}$ of $L$.
Then we have the diagonal representation $\rho:\pi_{1}(M,x)\to D_{n}(\C)$ such that 
$
\rho(\lambda)={\rm diag} (\int_{\lambda}e^{\delta_{1}},\dots ,\int_{\lambda}e^{\delta_{n}})
$
for $\lambda\in\pi_{1}(M,x)$. 
Consider the $\rho$-relative completion ${\mathcal G}_{\rho}(\pi_{1}(M,x))$ of the fundamental group of $M$.
Miller showed the following theorem.
\begin{theorem}{\rm (\cite[Theorem 6.1]{M})}
The space $H^{0}(E^{L}(M,x))$ is a Hopf algebra and  we have a Hopf algebra isomorphism
\[H^{0}(E^{L}(M,x))\cong \C[{\mathcal G}_{\rho}(\pi_{1}(M,x))].
\]
\end{theorem}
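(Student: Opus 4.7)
The plan is to identify both sides with matrix coefficients of $\rho$-relative representations of $\pi_{1}(M,x)$, so that the isomorphism becomes a comparison of two descriptions of the same Hopf algebra. By the definition of $\mathcal{G}_{\rho}$, its coordinate ring is the direct limit of coordinate rings $\C[\mathbf{G}]$ over all $\rho$-relative representations $\phi:\pi_{1}(M,x)\to \mathbf{G}$, and each such $\mathbf{G}$ can (by the remark cited from \cite{M}) be embedded in $T_{n}(\C)$ with $\mathbf{G}\cap U_{n}(\C)={\bf U}(\mathbf{G})$. Thus $\C[\mathcal{G}_{\rho}(\pi_{1}(M,x))]$ is generated by matrix-entry functions $\lambda\mapsto \phi(\lambda)_{ij}$, where the diagonal part of $\phi$ is fixed to be $\rho$.

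The first main step is to build $\Phi:\C[\mathcal{G}_{\rho}(\pi_{1}(M,x))]\to H^{0}(E^{L}(M,x))$. Given $\phi:\pi_{1}(M,x)\to \mathbf{G}\subset T_{n}(\C)$, I would realize $\phi$ as the monodromy of a flat connection on the trivial $\mathbf{G}$-bundle over $M$, given by an upper-triangular Lie-algebra-valued $1$-form $\Omega=\sum_{i}\delta_{i}E_{ii}+\sum_{i<j}\omega_{ij}E_{ij}$, where the $\delta_{i}$ are chosen from the $\Z$-basis of $L$ so that the diagonal monodromy agrees with $\rho$. The path-ordered exponential $\mathcal{P}\exp\int_{\lambda}\Omega$ then expands, after separating diagonal and strict-upper-triangular parts, into exactly Miller's exponential iterated integrals $\int_{\lambda}e^{\delta_{i}}\omega_{i i_{1}}e^{\delta_{i_{1}}}\cdots e^{\delta_{j}}$. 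Flatness of $\Omega$ (equivalently $d\omega_{ij}$ plus curvature correction$=0$, which reduces to closedness since the $\delta_{i}$ are closed) ensures homotopy invariance, so the matrix entries $\phi(\lambda)_{ij}$ land in $H^{0}(E^{L}(M,x))$.

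The second step is to exhibit $\Phi$ as a bijection. For surjectivity, given any closed exponential iterated integral $I=\int e^{\delta_{1}}\omega_{12}\cdots \omega_{n-1 n}e^{\delta_{n}}$, I would write down the tautological upper-triangular connection $\Omega$ with the $\delta_{i}$ on the diagonal and the $\omega_{i,i+1}$ on the superdiagonal; its monodromy is a $\rho$-relative representation whose $(1,n)$-entry is precisely $I$, so $I$ lies in the image. For injectivity, I would use that $\pi_{1}(M,x)$ has Zariski-dense image in $\mathcal{G}_{\rho}(\pi_{1}(M,x))$, so a coordinate function vanishing on all loops must vanish identically. Finally, the Hopf algebra structure has to be matched: the shuffle-product formula for exponential iterated integrals (extending Chen's classical shuffle identity by tracking the diagonal factors $e^{\delta_{i}}$) gives the product compatibility, while the deconcatenation formula applied to a composition $\lambda\cdot\mu$ of loops matches the coproduct dual to multiplication in $\mathcal{G}_{\rho}$, and loop reversal gives the antipode.

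The hardest step is the second one, specifically controlling the interplay between the infinite sums defining $\int e^{\delta}$ and the Zariski-density argument: one must know that the matrix coefficients of $\rho$-relative representations, which form an \emph{algebra} under tensor product, already exhaust $H^{0}(E^{L}(M,x))$, even though an arbitrary closed exponential iterated integral is presented as a single (infinite) sum rather than a polynomial in simpler pieces. Resolving this requires the Tannakian observation that the subspace of $H^{0}(E^{L}(M,x))$ coming from matrix coefficients is closed under product, coproduct and antipode, together with the ad-hoc surjectivity construction above, so that this subspace must equal the whole Hopf algebra. Convergence of the exponential series and the flatness-homotopy invariance equivalence are technical but handled in \cite{M}.
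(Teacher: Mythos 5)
The paper does not prove this statement: it is imported verbatim as Theorem 6.1 of Miller's paper \cite{M}, and the only internal trace of its proof is Remark \ref{SAII}, which records that every $\rho$-relative representation $\phi:\pi_{1}(M,x)\to{\bf G}\subset T_{n}(\C)$ is the monodromy of a flat upper-triangular connection on $M\times\C^{n}$, that this monodromy expands as $I+\sum^{\infty}_{i=1}\int\omega\cdots\omega$ with matrix entries given by exponential iterated integrals, and that these entries generate $\C[{\bf G}]$. Your first main step (the map $\Phi$ from matrix coefficients to closed exponential iterated integrals via the path-ordered exponential of a flat upper-triangular form) is exactly this mechanism, so that half of your outline agrees with what the paper and \cite{M} actually use, as do the injectivity and Hopf-structure parts (Zariski density, shuffle product, deconcatenation, loop reversal).

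The surjectivity step, however, has a genuine gap, and it is the step you yourself flag as the hardest. First, the ``tautological'' connection with the $\delta_{i}$ on the diagonal and the $\omega_{i,i+1}$ on the superdiagonal need not be flat: flatness requires $d\omega_{i,i+1}=\delta_{i}\wedge\omega_{i,i+1}+\omega_{i,i+1}\wedge\delta_{i+1}$ together with $\omega_{i,i+1}\wedge\omega_{i+1,i+2}=0$, none of which is implied by homotopy invariance of the single functional $\int e^{\delta_{1}}\omega_{12}\cdots\omega_{n-1,n}e^{\delta_{n}}$. Without flatness the parallel transport is not a homotopy functional and does not define a representation of $\pi_{1}(M,x)$ at all. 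Second, a general element of $H^{0}(E^{L}(M,x))$ is a finite linear combination of exponential iterated integrals that is homotopy-invariant only as a sum, with no individual term closed, so the construction does not even get started on a typical element. Your proposed repair --- that the span of matrix coefficients is closed under product, coproduct and antipode and therefore must exhaust the whole Hopf algebra --- does not follow, since a proper sub-Hopf-algebra is also closed under all three operations. What is actually needed, and what occupies the body of Miller's proof, is a Chen-type $\pi_{1}$ de Rham argument showing that \emph{every} homotopy-invariant element of $E^{L}(M,x)$ arises as a matrix entry of the transport of some genuinely flat upper-triangular connection with diagonal part in $L$; this analytic/combinatorial step cannot be replaced by the formal Tannakian observation.
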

\begin{remark}\label{SAII}
For any $\rho$-relative representation $\phi:\pi_{1}(M,x)\to {\bf G}$, Miller showed that $\phi$ is the monodromy of a flat connection $\omega$  on $M\times \C^{n}$ whose connection form is an upper triangular matrix.
Then the monodromy of $\omega$ is given by $ I+\sum^{\infty}_{i=1} \int \underbrace{\omega \omega \cdot \cdot \cdot \omega}_{i \ terms}$ and its  matrix entries are exponential iterated integrals.
In the proof of Theorem 6.1 of \cite{M}, Miller showed that these matrix entries generate the coordinate ring $\C[{\bf G}]$.
\end{remark}
Consider a simply connected solvable Lie group $G$ with a lattice $\Gamma$.
Take a diagonalization of the semi-simple part ${\rm ad}_{s}$ of the adjoint representation ${\rm ad}$ on $\g$. 
Write
${\rm ad}_{s}={\rm diag}(\delta_{1},\dots ,\delta_{n})
$
where $\delta_{1},\dots ,\delta_{n}$ are characters of $\g$.
By $\delta_{1},\dots ,\delta_{n}\in {\rm Hom}(\g,\C)$,
we regard  $\delta_{1},\dots ,\delta_{n}$ as left-invariant closed $1$-forms.
Let $L$ be  the $\Z$-module generated by $\delta_{1},\dots ,\delta_{n}$.
Consider the algebraic hull ${\bf H}_{\Gamma}$ of $\Gamma$.
Since we have $\pi_{1}(G/\Gamma,x)\cong\Gamma$, by Theorem \ref{alrr}, we have:
\begin{corollary}\label{ALGH}
We have a Hopf algebra isomorphism
\[
H^{0}(E^{L}(G/\Gamma,x))\cong  \C[{\bf H}_{\Gamma}].
\]
\end{corollary}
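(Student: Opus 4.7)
The plan is to chain the two results immediately at hand: the Miller isomorphism $H^{0}(E^{L}(G/\Gamma,x))\cong\C[\mathcal{G}_{\rho}(\pi_{1}(G/\Gamma,x))]$ stated just above, and the identification $\mathcal{G}_{{\rm Ad}_{s|_{\Gamma}}}(\Gamma)\cong\mathbf{H}_{\Gamma}$ from Theorem~\ref{alrr}. The bridge is to recognise that Miller's diagonal representation~$\rho$, built from the integrals of $e^{\delta_{i}}$ over loops, coincides with the restriction ${\rm Ad}_{s|_{\Gamma}}$; then the two isomorphisms compose to give the corollary.

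First I would apply Miller's theorem to $M=G/\Gamma$ with basepoint $x$ and $L$ the $\Z$-module spanned by $\delta_{1},\dots,\delta_{n}$. Using $\pi_{1}(G/\Gamma,x)\cong\Gamma$, this yields a Hopf algebra isomorphism $H^{0}(E^{L}(G/\Gamma,x))\cong\C[\mathcal{G}_{\rho}(\Gamma)]$, where $\rho(\gamma)={\rm diag}(\int_{\lambda}e^{\delta_{1}},\dots,\int_{\lambda}e^{\delta_{n}})$ for any loop $\lambda$ representing $\gamma\in\Gamma$.

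Next I would compute $\rho$ explicitly. Since each $\delta_{i}$ is closed, the exponential iterated integral collapses via the Taylor expansion of $\exp$ to $\int_{\lambda}e^{\delta_{i}}=\exp\bigl(\int_{\lambda}\delta_{i}\bigr)$. Regarding $\delta_{i}\in\g^{\ast}$ as a Lie algebra character (automatic since ${\rm ad}_{s}={\rm diag}(\delta_{1},\dots,\delta_{n})$ has abelian image), it exponentiates on the simply-connected group $G$ to a Lie group homomorphism $\tilde{\delta}_{i}:G\to\C$ with $d\tilde{\delta}_{i}=\delta_{i}$. Lifting $\lambda$ to a path from $e$ to $\gamma$ in $G$ and using closedness together with left-invariance gives $\int_{\lambda}\delta_{i}=\tilde{\delta}_{i}(\gamma)$, so $\rho(\gamma)={\rm diag}(\exp\tilde{\delta}_{1}(\gamma),\dots,\exp\tilde{\delta}_{n}(\gamma))$. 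This is exactly the matrix of ${\rm Ad}_{s}(\gamma)$ in the basis of $\g_{\C}$ that diagonalizes ${\rm ad}_{s}$, because ${\rm Ad}_{s}$ is the group-level integration of ${\rm ad}_{s}$. Hence $\rho={\rm Ad}_{s|_{\Gamma}}$, so $\mathcal{G}_{\rho}(\Gamma)=\mathcal{G}_{{\rm Ad}_{s|_{\Gamma}}}(\Gamma)$.

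Finally, Theorem~\ref{alrr} gives $\mathcal{G}_{{\rm Ad}_{s|_{\Gamma}}}(\Gamma)\cong\mathbf{H}_{\Gamma}$, and combining with Miller's isomorphism produces the desired $H^{0}(E^{L}(G/\Gamma,x))\cong\C[\mathbf{H}_{\Gamma}]$. The only non-formal step is the identification $\rho={\rm Ad}_{s|_{\Gamma}}$: one must keep straight the dictionary between the $\delta_{i}$ as weights of ${\rm ad}_{s}$ on $\g$ and as closed left-invariant $1$-forms on $G/\Gamma$, and track the naturality square relating $\exp$ on the Lie algebra to exponentiation at the group level. Once that bookkeeping is fixed, the corollary follows immediately.
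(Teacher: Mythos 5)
Your proposal is correct and follows essentially the same route as the paper, which derives the corollary directly by combining Miller's theorem with Theorem~\ref{alrr}. The only difference is that you explicitly verify the identification $\rho={\rm Ad}_{s|_{\Gamma}}$ via $\int_{\lambda}e^{\delta_{i}}=\exp\bigl(\int_{\lambda}\delta_{i}\bigr)$, a step the paper leaves implicit; your bookkeeping there is sound.
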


Let $E^{L}( \g^{\ast}_{\C})$ denote the subvector space of $E^{L}(G/\Gamma,x)$ generated by 
\[\{ \int e^{\delta_{1}}\omega_{12}\cdot \cdot \cdot \omega_{n-1n}e^{\delta_{n}}\vert \delta_{1},\cdot \cdot \cdot \delta_{n} \in L 
\ \ \ \omega_{12}, \omega_{23} ,\cdot \cdot \cdot, \omega_{n-1n} \in  \g^{\ast}_{\C}\}.\]
Studying the proof of \cite[Lemma 5.1]{M}, we can see that $E^{L}( \g^{\ast}_{\C})$ is closed under the multiplication.
We define the subring 
\[H^{0}(E^{L}(\g^{\ast}_{\C}))=E^{L}(\g^{\ast}_{\C})\cap H^{0}(E^{L}(G/\Gamma,x))\]
 of $H^{0}(E^{L}(G/\Gamma,x))$.

\begin{theorem}\label{INVT}
We have $H^{0}(E^{L}(\g^{\ast}_{\C}))= H^{0}(E^{L}(G/\Gamma,x))$.
\end{theorem}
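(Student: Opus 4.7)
The plan is to construct a flat connection on $G/\Gamma$ whose connection form lies in $\g^{\ast}_{\C}\otimes \mathfrak{t}_n$ (so every matrix entry is a left-invariant $1$-form) and whose diagonal entries realize the characters spanning $L$, and then to apply Remark \ref{SAII} to conclude that the matrix entries of its monodromy, which are exponential iterated integrals of left-invariant forms, already generate $\C[{\bf H}_{\Gamma}]$.

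First I would invoke the direct construction of Section 2.4 to fix, for the algebraic hull $\psi:G\to {\bf H}_{G}$, a faithful algebraic embedding ${\bf H}_{G}\hookrightarrow T_{n}(\C)$ with ${\bf H}_{G}\cap U_{n}(\C)={\bf U}({\bf H}_{G})$, as granted for ${\rm Ad}_{s}$-relative representations. The composition $\Psi:G\to {\bf H}_{G}\hookrightarrow T_{n}(\C)$ is a Lie group homomorphism; so I would pull back the Maurer--Cartan form of $T_{n}(\C)$ to obtain a left-invariant $\mathfrak{t}_{n}$-valued $1$-form $\omega$ on $G$ satisfying $d\omega+\omega\wedge\omega=0$. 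Being left-$G$-invariant and right-$\Gamma$-invariant, $\omega$ descends to a $\mathfrak{t}_{n}$-valued $1$-form on $G/\Gamma$ with components in $\g^{\ast}_{\C}$, and it is the connection form of a flat connection on $(G/\Gamma)\times\C^{n}$ whose monodromy representation is $\psi|_{\Gamma}:\Gamma\to {\bf H}_{\Gamma}\subset T_{n}(\C)$.

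Next I would identify the diagonal part of $\omega$. Since the diagonal quotient $T_{n}(\C)\to D_{n}(\C)$ sends ${\bf H}_{G}$ onto ${\bf H}_{G}/{\bf U}({\bf H}_{G})$ and, by the lemma preceding Theorem \ref{relal}, this composite $G\to D_{n}(\C)$ is the ${\rm Ad}_{s}$ representation, the diagonal of $\omega$ is precisely $\operatorname{diag}(\delta_{1},\dots,\delta_{n})$, i.e.\ the generators of the $\Z$-module $L$. Applied to $\omega$, the monodromy formula of Remark \ref{SAII} gives $\Psi(\lambda)_{ij}=\bigl(I+\sum_{i\ge 1}\int \underbrace{\omega\cdots\omega}_{i}\bigr)_{ij}$ for $\lambda\in\Gamma$, and after expanding the geometric series in the diagonal entries in the standard way, each off-diagonal matrix entry becomes an exponential iterated integral of the shape $\int e^{\delta_{i_{0}}}\omega_{i_{0}i_{1}}\cdots \omega_{i_{k-1}i_{k}}e^{\delta_{i_{k}}}$ with all $\omega_{i_{j-1}i_{j}}\in \g^{\ast}_{\C}$ and all $\delta_{i_{j}}\in L$. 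Hence these entries all lie in $H^{0}(E^{L}(\g^{\ast}_{\C}))$.

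Finally, by Remark \ref{SAII} applied to the ${\rm Ad}_{s|_{\Gamma}}$-relative representation $\Gamma\to {\bf H}_{\Gamma}$, these matrix entries already generate the Hopf algebra $\C[{\bf H}_{\Gamma}]$, which by Corollary \ref{ALGH} is the whole of $H^{0}(E^{L}(G/\Gamma,x))$. Combined with the obvious inclusion $H^{0}(E^{L}(\g^{\ast}_{\C}))\subset H^{0}(E^{L}(G/\Gamma,x))$, this yields the equality. The main obstacle I expect is the last step: verifying that the matrix entries coming from the single flat connection $\omega$ really suffice to generate $\C[{\bf H}_{\Gamma}]$; this reduces to Miller's argument in \cite{M}, which requires that ${\bf H}_{G}\hookrightarrow T_{n}(\C)$ be chosen faithfully so that the coordinate functions on $T_{n}(\C)$ restrict to algebra generators of $\C[{\bf H}_{\Gamma}]$, and that $\omega$ be left-invariant so that every iterated integral in the series expansion lives in $E^{L}(\g^{\ast}_{\C})$.
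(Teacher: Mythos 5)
Your proposal is correct and takes essentially the same route as the paper: the paper also embeds ${\bf H}_{G}$ in $T_{r}(\C)$ via the ${\rm Ad}_{s}$-relative property, takes the derivative $\psi_{\ast}$ of $\psi$ as a left-invariant $\frak{t}_{r}(\C)$-valued flat connection form (your Maurer--Cartan pullback is the same object), notes the diagonal entries lie in $L$, and invokes Remark \ref{SAII} together with Corollary \ref{ALGH} to conclude that the resulting matrix entries, which lie in $E^{L}(\g^{\ast}_{\C})$, generate $\C[{\bf H}_{\Gamma}]$.
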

\begin{proof}
Consider the algebraic hull $\psi:G\to {\bf H}_{G}$ of $G$.
Since $\psi:G\to {\bf H}_{G}$  is ${\rm Ad}_{s}$-relative, we can assume ${\bf H}_{G}\subset T_{r}(\C)$ and $U_{r}(\C)\cap {\bf H}_{G}= {\bf U}({\bf H}_{G})$ as in Section 2.2.
Let $\psi_{\ast}:\g\to {\frak t}_{r}(\C)$ be the derivative of $\psi$ where $ {\frak t}_{r}(\C)$ is the Lie algebra of $T_{r}(\C)$.
We write
\[\psi_{\ast}=\left(
\begin{array}{ccccc}
\omega_{11}&\omega_{12}&\cdots&&\omega_{1r}\\
&\ddots&\ddots&&\vdots\\
&&\ddots& &\\
&&&\ddots&\omega_{r-1r}\\
&&&&\omega_{rr}

\end{array}
\right) 
\]
  as we consider $\psi_{\ast}\in {\rm Hom}(\g,\C)\otimes T_{r}(\C)$.
Then we have 
\[(d\psi_{\ast}-\psi_{\ast}\wedge\psi_{\ast})(X,Y)=\psi_{\ast}([X,Y])-[\psi_{\ast}(X),\psi_{\ast}(Y)]=0
\]
for $X,Y\in \g$.
Hence we have the flat connection $d-\psi_{\ast}$ on the vector bundle $G\times \C^{r}$.
Consider the  parallel transport $T=I+\sum^{\infty}_{i=1} \int \psi_{\ast}  \cdot \cdot \cdot \psi_{\ast}$ of this connection.
Let $P_{e}G$ be the space of the paths $\gamma:[0,1]\to G$ with $\gamma(0)=e$ where $e$ is the identity element of $G$.
We consider the spaces $P_{e}G/\sim$ of homotopy classes of  $\gamma \in P_{e}G$ relative to $\{0,1\}$.
Since $G$ is simply connected, we have $P_{e}G/\sim=G$.
It is easily seen that the  parallel transport $T$  on  $P_{e}G/\sim=G$ is a homomorphism whose derivative is equal to $\psi_{\ast}$.
Hence  we can identify  the  parallel transport $T$  on  $P_{e}G/\sim$ with the representation $\psi$.
Since $\psi$ is ${\rm Ad}_{s}$-relative and the diagonal entries of $T$ are $\int e^{\omega_{11}},\dots ,\int e^{\omega_{rr}}$, we have $\omega_{11},\dots ,\omega_{rr}\in L$.
 By the proof of Theorem \ref{alrr}, the injection $\phi:\Gamma\to {\bf H}_{\Gamma}$ is the restriction of $\psi$ on $\Gamma$.
Thus the representation $\phi$ is the monodromy $I+\sum^{\infty}_{i=1} \int \psi_{\ast}  \cdot \cdot \cdot \psi_{\ast}$ of the left-invariant flat connection 
$d-\psi_{\ast}$ on the vector bundle $G/\Gamma\times \C^{r}$.
By Remark \ref{SAII}, the ring $\C[{\bf H}_{\Gamma}]$ is generated by matrix entries of $I+\sum^{\infty}_{i=1} \int \psi_{\ast}  \cdot \cdot \cdot \psi_{\ast}$.
Hence the theorem follows from Corollary \ref{ALGH}.
\end{proof}

\section{\bf An  Example and a remark}
Let $N$ be a simply connected nilpotent Lie group and $\n$ the Lie algebra of $N$.
We suppose that $G$ has a lattice $\Gamma$.
Then we can represent the coordinate ring of the Malcev completion of $\Gamma$ by using  Chen's iterated integral of left-invariant forms on $N$.
In this paper we give another representation of the Malcev completion of the fundamental group of some nilmanifold.

Consider the solvable Lie group $G=\R\ltimes_{\mu} \C^{2}$ such that
$\mu (t)=\left(
\begin{array}{cc}
e^{i\pi t}&te^{i\pi t}\\
0&e^{i \pi t}
\end{array}
\right)
$.
We have the  lattice  $\Gamma
=2\Z\ltimes (\Z+i\Z)$.
We consider the inclusion $\bigwedge \g^{\ast}_{\C} \subset A^{\ast}(G/\Gamma)$.
The map $H^{\ast}( \bigwedge \g^{\ast}_{\C})\to H^{\ast}(G/\Gamma, \C )$ induced by this inclusion is injective (see \cite{R}).
By $(\bigwedge \g^{\ast}_{\C})^{0}=\C$ and $(\bigwedge \g^{\ast}_{\C})^
{1}\cap dA^{0}(G/\Gamma)=0$, we have an isomorphism
\[ H^{0}(B(\bigwedge \g^{\ast}_{\C}, x))\cong H^{0}
(\bar{B}(\bigwedge \g^{\ast}_{\C}))\]
 where $H^{0}(B(\bigwedge \g^{\ast}_{\C},x) )$ is the space of closed Chen's iterated integrals of the left-invariant forms on the based loop space $\Omega_{x}G/\Gamma$ and  $H^{0}(\bar{B}(\bigwedge \g^{\ast}_{\C}))$ is the 
reduced bar construction (see \cite{CH}).
Since we have $H^{1}(\bigwedge \g^{\ast}_{\C})\cong \C$,
 we have an isomorphism $ H^{0}(B(\bigwedge \g^{\ast}_{\C},x) )\cong \C[{\mathbb G}_{\rm ad}]$.
 
On the other hand, let $L$ be the sub $\Z$-module of  $\g^{\ast}_{\C}$ generated 
by 
$\{ i\pi dt \}$.
Then by Corollary \ref{ALGH} and  Theorem \ref{INVT}, we have an isomorphism
\[H^{0}(E^{L}(\g^{\ast}_{\C}))\cong \C[{\bf H}_{\Gamma}].\]
Since we have $\mu (2t)=\left(
\begin{array}{cc}
1&2t\\
0&1
\end{array}
\right)$
for $t\in\Z$, $\Gamma$ is nilpotent.
Hence ${\bf H}_{\Gamma}$ is the Malcev completion of $\Gamma$.
Since two compact solvmanifolds having the same fundamental group are 
diffeomorphic (see \cite{Mosr} or \cite{R}),
 $G/\Gamma$ is diffeomorphic to a nilmanifold.
By these arguments we notice:
\begin{remark}
By  closed Chen's iterated integrals of the $1$-forms $\g_{\C}^{\ast}$ on $G/\Gamma$, we can not represent the coordinate ring of Malcev completion of the fundamental 
group of the nilmanifold $G/\Gamma$.
But  the closed $L$-exponential iterated integrals of $\g_{\C}^{\ast}$ enable 
us to represent it. 
\end{remark}

{\bf  Acknowledgements.} 

The author would like to express his gratitude to   Toshitake Kohno for helpful suggestions and stimulating discussions.
This research is supported by JSPS Research Fellowships for Young Scientists.

\end{document}